\newtheorem{theorem}{Theorem}[subsection]
\newtheorem{lemma}[theorem]{Lemma}
\newtheorem{proposition}[theorem]{Proposition}
\newcommand{\T}{\ms T}
\renewcommand{\epsilon}{\varepsilon}
\newcommand{\ms}{\mathsf}
\newcommand{\id}{\operatorname{Id}}
\newcommand{\defeq}{\coloneqq}
\newcommand{\PSL}{{\mathsf{PSL}_2(\mathbb R)}}
\newcommand{\CS}[1]{\operatorname{CS}_M\big(#1\big)}
\newcommand{\CSN}[1]{\operatorname{CS}_N\big(#1\big)}
\title[On Tholozan's volume formula]{On Tholozan's volume formula\\ for \\closed anti-de-Sitter 3-manifolds} 
\author{Fran\c cois LABOURIE}
\thanks{}
\begin{document}
\maketitle
%\centerline{\it For Olaf, who said yes to all my crazy ideas \footnote{Generated by AI}}
%\tableofcontents
\vskip 1 truecm
%{\it I would like to take the opportunity to thank all the contributors to this volume, all the  participants in this birthday conference, not forgetting the organizers, for their amazing contributions.}
\vskip 0.2 truecm
This short note is meant as an appendix to Nicolas Tholozan's article \cite{Tholozan:2023} in the same volume. The point here  is to present arguments relating volumes of anti-de-Sitter manifolds of dimension 3 to Chern-Simons invariants, and to  give as a corollary Tholozan's volume formula from his thesis \cite{Tholozan:2014aa}, later extended to higher dimensions in \cite{Tholozan:2018aa}. After hearing his defence, I presented this proof in a seminar in MSRI in 2015 and was planning to save the writing of it for Nicolas's 60th birthday conference, but finally decided against.  

Following here the convention of  \cite{Kulkarni:1985aa, Mess:2007kx,Kassel:2009aa,Salein:2000vm}, an {\em anti-de-Sitter 3-manifold}  (In short AdS manifold) is a manifold of dimension 3, modelled on $\PSL$ equipped with its Killing metric normalised so that the projection from $\PSL$ to $\mathbb H^2$, equipped with its hyperbolic metric, is a metric  submersion.  Hence, $\PSL$ is a Lorentz 3-manifold whose group of isometries is, up to finite covers and quotients,  $\PSL\times\PSL$ where each factor corresponds respectively to the action on the right and on the left on the group $\PSL$. From the homogeneity of the action on timelike vectors, it follows that $\PSL$, hence any AdS 3-manifold, has constant curvature.

Recall that  by a theorem of Bruno Klingler  \cite{Klingler:1996aa} every closed AdS 3-manifold is complete, and thus a quotient of the universal cover $M_0$ of $\PSL$ by a discrete subgroup of $M_0\times M_0$.

Ravi Kulkarni and Frank Raymond \cite{Kulkarni:1985aa}, followed by Franois Salein \cite{Salein:1999aa}, have obtained further restrictions on the possible discrete subgroups appearing, and large classes of examples were constructed in \cite{Salein:2000vm}. Later Fanny Kassel \cite{Kassel:2009aa} obtained the full classification: namely, any closed AdS 3-manifold $M$ is, up to finite covering, a quotient of $\PSL$ by a discrete subgroup of $\PSL\times\PSL$ of the form $(\rho,\sigma)(\pi_1(S))$ where $S$ is a closed surface of negative Euler characteristic, $\rho : \pi_1(S) \to  \PSL$ is a Fuchsian representation, and $\sigma : \pi_1(S) \to  \PSL$ is a non-Fuchsian representation, such that there exists a $(\rho,\sigma)$-equivariant map from ${\bf H}^2$ to ${\bf H}^2$ with Lipschitz constant less than 1. This easily implies that $M=M(S,k)$ is a circle bundle  over $S$, of Euler number $k \neq 0$ (see \cite{Gueritaud:2017aa}). 
\vskip 0.2 truecm

Nicolas Tholozan \cite{Tholozan:2014aa}, answering a question in \cite{Andersson:2007tt}, gave the following striking formula for the volume of the circle bundle  $M(S,k)$ of Euler characteristic $k$ over a surface $S$ of Euler characteristic $e$,  with an AdS structure associated to representations  $\rho$ and $\sigma$ of $\pi_1(S)$ in $\PSL$ of respective Euler class $e$ and $f$:
\begin{eqnarray}
	\operatorname{Vol}\left(M(S,k)\right)= \frac{4\pi^2}{k}\left(e^2-f^2\right)\ . \label{eq:Tholoz}
\end{eqnarray}
As a corollary of his formula, Tholozan obtains a rigidity result: {\em the volume is constant under continuous deformation of any AdS  3-manifold}. Tholozan later on  generalised this formula and rigidity to higher dimensions.

We will show how this low dimensional case of Tholozan's general result follows from considerations on the Chern--Simons invariant. We will omit important technical details. 

I thank Bill Goldman, Fanny Kassel,  Nicolas Tholozan and Jérémy Toulisse for their help and interest.

\section{Chern--Simons invariant in dimension 3}
We present a short and condensed version of the theory and refer to the original article by Shiing Shen Chern and James Simons \cite{Chern:1974aa} for a more careful description or \cite{Labourie:2013ka} for an elementary one in dimension 3.

Let $M$ be a closed oriented 3-manifold equipped with a  vector bundle  $E$. We denote by $\mathcal D$ the affine space of connections on $E$ and define the tangent space to $\mathcal D$ at a connection $\nabla$ as $\mathsf T\mathcal D\defeq \Omega^1(M,\operatorname{End}(E))$.
\subsection{Chern--Simons form}
Given a connection $\nabla$, we now consider the linear map  $\omega^{CS}$ from $\mathsf T\mathcal D$ to $\mathbb R$, given by 
$$
 A\mapsto \int_M \operatorname{trace}\ (A\wedge R^\nabla)\ ,
$$
where $R^\nabla$ is the curvature of $\nabla$ seen as an element of $\Omega^2(M,\operatorname{End}(E))$, and $A\wedge R^\nabla$ the 3-form on $M$  with value in $\operatorname{End}(E)$ given by 
$$
A\wedge R^\nabla(X_1,X_2,X_3)=\frac{1}{6}\sum_{\sigma\in\mathfrak S_3} (-1)^{\epsilon(\sigma)}A(X_{\sigma(1)})R^\nabla(X_{\sigma(2)},X_{\sigma(3)})\ .
$$
The starting result of Chern--Simons theory is
\begin{proposition}
 	Given two gauge equivalent connections $\nabla_0$ and $\nabla_1$ and a path   $\gamma$ of connections joining $\nabla_0 $ to $\nabla_1$ then
\begin{eqnarray}
	\frac{1}{8\pi^2}\int_\gamma \omega^{CS}\in\mathbb Z\ .\label{eq:pontry}
\end{eqnarray}
\end{proposition}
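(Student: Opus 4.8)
The plan is to realise $\frac{1}{8\pi^2}\int_\gamma\omega^{CS}$ as a Chern--Weil characteristic number of an auxiliary bundle over the \emph{closed} $4$-manifold $M\times S^1$, where integrality is automatic. A preliminary remark: $\omega^{CS}$ is a closed $1$-form on the affine space $\mathcal D$ --- evaluating $\d\omega^{CS}$ on two constant vector fields $a,b\in\Omega^1(M,\operatorname{End}(E))$ gives $\int_M\big(\operatorname{trace}(b\wedge\d^\nabla a)-\operatorname{trace}(a\wedge\d^\nabla b)\big)=\int_M\d\,\operatorname{trace}(a\wedge b)=0$ by Stokes (equivalently, $\omega^{CS}$ is, up to a constant, the differential of the Chern--Simons functional). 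Thus $\int_\gamma\omega^{CS}$ depends only on $\nabla_0,\nabla_1$, which is reassuring though not strictly needed below.

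\emph{Transgression.} Write $\gamma=(\nabla_t)_{t\in[0,1]}$, pull $E$ back to $\hat E\defeq\pi^*E$ over $M\times[0,1]$ (with $\pi$ the projection) and equip $\hat E$ with the connection $\hat\nabla$ that restricts to $\nabla_t$ on $M\times\{t\}$ and differentiates trivially along $[0,1]$. If $A_t$ is the connection form of $\nabla_t$ in a local trivialisation, the curvature of $\hat\nabla$ is $R^{\hat\nabla}=R^{\nabla_t}+\d t\wedge\dot A_t$ with $\dot A_t=\partial_t A_t$. In $R^{\hat\nabla}\wedge R^{\hat\nabla}$, the horizontal term $R^{\nabla_t}\wedge R^{\nabla_t}$ vanishes since $\dim M=3$, the term with two $\d t$'s vanishes, and after $\operatorname{trace}$ the two mixed terms coincide, leaving
\[
 \operatorname{trace}(R^{\hat\nabla}\wedge R^{\hat\nabla})=2\,\d t\wedge\operatorname{trace}(\dot A_t\wedge R^{\nabla_t}).
\]
Integrating over $M\times[0,1]$ by Fubini and recognising $\int_M\operatorname{trace}(\dot A_t\wedge R^{\nabla_t})=\omega^{CS}|_{\nabla_t}(\dot\nabla_t)$, we obtain that $\int_{M\times[0,1]}\operatorname{trace}(R^{\hat\nabla}\wedge R^{\hat\nabla})$ is a fixed universal multiple of $\int_\gamma\omega^{CS}$.

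\emph{Closing up.} Here gauge equivalence is used, and is genuinely needed: for a general pair $\nabla_0,\nabla_1$ the quantity above is only a \emph{relative} characteristic number of $\hat E\to M\times[0,1]$ and need not be an integer. Pick a gauge transformation $g$ of $E$ with $g^*\nabla_0=\nabla_1$ and glue $M\times\{0\}$ to $M\times\{1\}$ using $g$ on the fibres, producing a bundle $\bar E$ over $M\times S^1$ with a connection $\bar\nabla$; the relation $g^*\nabla_0=\nabla_1$ is precisely what makes $\bar\nabla$ well defined across the seam, and off the seam $\bar\nabla=\hat\nabla$. Since $\operatorname{trace}(R^{\bar\nabla}\wedge R^{\bar\nabla})$ is a global $4$-form on $M\times S^1$ restricting to the integrand just computed (and the result is manifestly independent of $g$), $\int_{M\times S^1}\operatorname{trace}(R^{\bar\nabla}\wedge R^{\bar\nabla})$ is the same universal multiple of $\int_\gamma\omega^{CS}$. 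By Chern--Weil theory, $\tfrac1{8\pi^2}\int_{M\times S^1}\operatorname{trace}(R^{\bar\nabla}\wedge R^{\bar\nabla})$ represents, up to sign, a characteristic number of $\bar E$: the first Pontryagin number when $E$ is real, or $\langle\tfrac12 c_1(\bar E)^2-c_2(\bar E),[M\times S^1]\rangle$ when $E$ is complex. The first is in $\mathbb Z$ by definition; for the second, write $c_1(\bar E)=a+b\times\theta$ with $a\in H^2(M)$, $b\in H^1(M)$ and $\theta$ a generator of $H^1(S^1)$: since $H^4(M)=0$ and $\theta^2=0$, we get $c_1(\bar E)^2=2\,(a\cup b)\times\theta$, so $\tfrac12 c_1(\bar E)^2$ is already integral and the pairing lies in $\mathbb Z$. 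Combining this with the transgression identity --- the normalisation $8\pi^2$ in the statement being calibrated so that the universal proportionality constant and the Chern--Weil factor cancel --- yields $\tfrac1{8\pi^2}\int_\gamma\omega^{CS}\in\mathbb Z$.

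The conceptual skeleton is short; the delicate points are two. First, making the cylinder/mapping-torus construction and the ``temporal gauge'' intrinsic enough to justify the block form of $R^{\bar\nabla}$ without assuming $E$ trivialisable. Second --- the real bookkeeping --- tracking the normalisations of $\omega^{CS}$, of the wedge product, and of $\operatorname{trace}$ so that the universal constant in the transgression identity comes out to be exactly $8\pi^2$; this is what is packaged in ``the starting result of Chern--Simons theory'', and it is equivalent to the classical statement that the Chern--Simons functional descends to $\mathcal D/\mathcal G$ as a map valued in $\mathbb R/8\pi^2\mathbb Z$.
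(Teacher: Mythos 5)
Your proof is correct and follows essentially the same route as the paper: both interpret the path of connections as a connection on the bundle pulled back to $M\times S^1$ (closed up along the seam using the gauge transformation) and identify $\frac{1}{8\pi^2}\int_\gamma\omega^{CS}$ with the first Pontryagin number of the resulting bundle, which is an integer by Chern--Weil theory. Your write-up is merely more explicit than the paper's sketch about the gluing step and about where gauge equivalence is genuinely needed.
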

\begin{proof} We only sketch the proof: we interpret a path $\{\nabla_t\}_{t\in[0,1]}$ between two gauge equivalent connections  as a connection $\nabla$ on the  bundle $E$ on $W\defeq M\times S^1$ \cite[proposition 7.2.6.]{Labourie:2013ka}. More precisely 
 $\nabla$ is defined by the following procedure: for $(m,s)$ in $M\times S^1$, we write   $\T_{(m,s)}W=\T M\oplus \mathbb R$, then we identify the space of sections of $E$ on $W$, as $C^\infty(S^1,\Gamma(M,E))$, finally we  define for a section $\sigma$ of $E$
$$
\nabla_{(U,1)} \sigma\defeq\nabla^t_U\sigma(s)+\left(\frac{\partial \sigma}{\partial t}\right)(s)\ .
$$
From this interpretation, we get
\begin{eqnarray}
\int_\gamma \omega^{CS}=\int_{M\times S^1}\operatorname{trace}\ \left(R^{\nabla}\wedge R^{\nabla}\right)=8\pi^2{\rm p}_1(E)\ ,\label{eq:pontry0}
\end{eqnarray}
where ${\rm p}_1(E)$ is the first Pontryagin number. Thus
$$
\frac{1}{8\pi^2}\int_\gamma \omega^{CS}\in\mathbb Z\ .
$$

\end{proof}
However when $E$ has rank 3 and is equipped with a quadratic form of signature $(2,1)$, we have a stronger result:
\begin{proposition}\label{pro:pontry2}
	If $E$ is equipped with a quadratic  $q$ form of signature $(2,1)$, if $\nabla_1$ and $\nabla_2$ are two $q$-connections gauge equivalent (when the gauge group is $\mathsf{SO}_0(2,1)$), then 
	\begin{eqnarray}
\int_\gamma \omega^{CS}=0\ .\label{eq:pontry2}
\end{eqnarray}
\end{proposition}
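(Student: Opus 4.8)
The plan is to run the construction from the proof of the previous proposition, but keeping track of the orthogonal structure. Fix a gauge transformation $g$ with values in $\mathsf{SO}_0(2,1)$ and $g\cdot\nabla_1=\nabla_2$; gluing the family $\gamma=\{\nabla^t\}_{t\in[0,1]}$ over $M\times[0,1]$ by $g$ at the two ends produces, as in \cite[Proposition~7.2.6]{Labourie:2013ka}, a rank-$3$ bundle $\widehat E$ over $W\defeq M\times S^1$ equipped with a connection $\widehat\nabla$, and dividing \eqref{eq:pontry0} by $8\pi^2$ gives $\tfrac1{8\pi^2}\int_\gamma\omega^{CS}={\rm p}_1(\widehat E)$. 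Since every $\nabla^t$ preserves $q$ and $g$ preserves $q$, the bundle $\widehat E$ inherits a quadratic form of signature $(2,1)$, i.e.\ its structure group is $\mathsf{SO}_0(2,1)$. So it suffices to prove ${\rm p}_1(\widehat E)=0$.

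Next I would exploit that $\mathsf{SO}_0(2,1)$ deformation retracts onto its maximal compact subgroup $\mathsf{SO}(2)$. Hence $\widehat E$ admits a topological reduction $\widehat E\cong\ell\oplus\underline{\mathbb R}$ with $\ell$ an oriented plane bundle (equivalently a complex line bundle $L$) and $\underline{\mathbb R}$ the negative-definite line, canonically trivial. As ${\rm p}_1$ is additive and ${\rm p}_1$ of an oriented plane bundle is the square of its Euler class, this gives
\[
{\rm p}_1(\widehat E)={\rm p}_1(\ell)=e(\ell)^2=c_1(L)^2\in H^4(W;\mathbb Z),
\]
so it remains to show $\langle c_1(L)^2,[W]\rangle=0$.

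This is where the dimension of $M$ must be used. By Künneth, $H^2(W;\mathbb Z)=H^2(M;\mathbb Z)\oplus\big(H^1(M;\mathbb Z)\otimes H^1(S^1;\mathbb Z)\big)$, so I would write $c_1(L)=a+b$ with $a=c_1(L)|_{M\times\{*\}}\in H^2(M;\mathbb Z)$ and $b$ the ``transverse'' part, governed by the homotopy class of the monodromy $g$ (its class in $\pi_0$ of the $\mathsf{SO}(2)$-gauge group, which is $[M,\mathsf{SO}(2)]=H^1(M;\mathbb Z)$). The square of the $H^2(M)$-part lands in $H^4(M;\mathbb Z)=0$ and the square of the transverse part lands in $H^0(M)\otimes H^2(S^1;\mathbb Z)=0$, whence $\langle c_1(L)^2,[W]\rangle=2\langle a\cup b,[M]\rangle$. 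Two remarks then finish it. If $g$ is taken in the identity component of the gauge group, then $b=0$ and in fact $\widehat E$ is pulled back from $M$ along $W\to M$, so ${\rm p}_1(\widehat E)$ lies in the image of $H^4(M;\mathbb Z)=0$. Without that, in the situation at hand $M$ is a circle bundle $\pi\colon M\to S$ of nonzero Euler number and $\ell$ is topologically pulled back from $S$, so by the Gysin sequence $a\in\pi^*H^2(S;\mathbb Z)$ is a torsion class; then $a\cup b$ is a torsion element of $H^3(M;\mathbb Z)\cong\mathbb Z$, which is torsion-free, hence $a\cup b=0$.

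The main obstacle is precisely this last vanishing. The homotopy equivalence $\mathsf{SO}_0(2,1)\simeq\mathsf{SO}(2)$ by itself only reduces matters to an Euler-class square, and on $W=M\times S^1$ such a square need not vanish; one genuinely has to use that the relevant Euler class has no component transverse to $M$ --- either because the gauge transformation is isotopic to the identity, or because of the torsion constraint forced by the circle-bundle structure of $M$. Making this precise, together with a careful check of the orientation and normalisation conventions in the identity $\tfrac1{8\pi^2}\int_\gamma\omega^{CS}={\rm p}_1(\widehat E)$, are among the ``important technical details'' that this note sets aside.
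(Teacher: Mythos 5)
Your proposal follows the same route as the note up to the splitting of the mapping-torus bundle: interpret the path glued by $g$ as a bundle $\widehat E$ over $W=M\times S^1$, reduce to showing ${\rm p}_1(\widehat E)=0$, and split $\widehat E=\ell\oplus\underline{\mathbb R}$ using the reduction of $\mathsf{SO}_0(2,1)$ to its maximal compact $\mathsf{SO}(2)$ (the note does the same thing by choosing an auxiliary Euclidean metric and diagonalising it against $q$). The divergence is at the final step, and there you are right and the note is too quick: it dismisses ${\rm p}_1(\ell)$ on the grounds that $\ell$ has rank $2$, but rank considerations only kill ${\rm p}_i$ for $2i>\operatorname{rank}$; for an oriented plane bundle ${\rm p}_1(\ell)=e(\ell)^2$, which is not forced to vanish on a $4$-manifold, not even on a product $M^3\times S^1$. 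Indeed, the proposition as literally stated (arbitrary $E$ with a $(2,1)$-form, arbitrary $\mathsf{SO}_0(2,1)$-valued gauge transformation) fails: take $M=T^3$, $\ell$ a plane bundle whose Euler class is dual to a coordinate $2$-torus, and $g$ an $\mathsf{SO}(2)$-valued gauge transformation winding once around the transverse circle; a direct abelian Chern--Simons computation gives $\frac{1}{8\pi^2}\int_\gamma\omega^{CS}=\pm 2\langle a\cup b,[M]\rangle\neq 0$, which is exactly the cross term you isolate via K\"unneth. So the additional input you supply is genuinely needed and is the real mathematical content of the statement.

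Of your two ways of killing the cross term, the first (take $g$ in the identity component of the gauge group, so that $b=0$ and $\widehat E$ is pulled back from $M$, whence ${\rm p}_1(\widehat E)\in\pi^*H^4(M;\mathbb Z)=0$) is the one that matters for the note: the only gauge transformations actually used downstream are conjugations by constant elements of the connected group $\mathsf{SO}_0(2,1)$, and these are homotopic to the identity. The second (torsion of $\pi^*H^2(S;\mathbb Z)$ inside $H^2(M(S,k);\mathbb Z)$ via the Gysin sequence) is correct but rests on the extra hypothesis you use tacitly, namely that $\ell$ is topologically pulled back from $S$; this holds for $E=\mathsf T M$, whose spacelike plane bundle is essentially $\pi^*\mathsf T S$, but not for an abstract $(2,1)$-bundle over $M$. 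With one of these caveats made explicit, your argument is complete; it repairs a genuine gap in the proof given here rather than introducing one.
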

\begin{proof}
	We can always choose a Euclidean metric on $E$ and combining with the quadratic form $q$, we obtain, by finding a common orthogonal basis, a splitting of $E$ as two subbundles
	$$
	E=L\oplus L^\perp\ ,
	$$
	where $L$ is the line bundle generated by the timelike vector of the base. Denoting by ${\rm p}_1$ the first Pontryagin class on $M\times S^1$, by additivity of the first Pontryagin class \cite{Milnor:1974aa} we get
	$$
	{\rm p}_1(E)={\rm p}_1(L) +{\rm p}_1(L^\perp)\ .
	$$
	Observe now that since $L$ and $L^\perp$ have dimension 1 and 2 respectively, ${\rm p}_1(L)={\rm p}_1(L^\perp)=0$. Thus ${\rm p}_1(E)=0$. The result now follows.
\end{proof}

From  equation \eqref{eq:pontry}, one deduces that $\omega^{CS}$ is exact:  the integral of $\omega^{CS}$ on a path of connections only depends on the end points of the path. We then define, given two connections $\nabla_1$ and $\nabla_2$, the {\em Chern--Simons invariant} of the pair $(\nabla_1,\nabla_2)$ by 
$$
\CS{\nabla_1,\nabla_2}\defeq\frac{1}{8\pi^2}\int_\gamma \omega^{CS}\in\mathbb Z\ ,
$$
Moreover, if $\nabla_1$ and $\nabla_2$ are two  $q$-flat connections with holonomy $\rho_1$ and $\rho_2$ with values in $\mathsf{SO}_0(2,1)$, we define
$$
\CS{\rho_1,\rho_2}\defeq\CS{\nabla_1,\nabla_2}\ .
$$
The definition in unambiguous: observe that by proposition \ref{pro:pontry2}, $\CS{\rho_1,\rho_2}$ is well defined for representations defined up to conjugacy by the group $\mathsf{SO}_0(2,1)$.
\subsection{Some properties}
As an immediate consequence of the definition, we have the following  easy results:
\begin{lemma}{\sc[Chasles relation]}\label{lem:Chas}
We have
\begin{eqnarray*}
\CS{\nabla_1,\nabla_2}=\CS{\nabla_1,\nabla_3}+\CS{\nabla_3,\nabla_2}\ .	
\end{eqnarray*}
\end{lemma}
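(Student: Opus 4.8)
The plan is to reduce the identity to the additivity of the line integral $\int_\gamma\omega^{CS}$ under concatenation of paths, once one knows that this integral depends only on the endpoints. First I would fix a smooth path $\gamma_{13}$ of connections from $\nabla_1$ to $\nabla_3$ and a smooth path $\gamma_{32}$ from $\nabla_3$ to $\nabla_2$; such paths exist because $\mathcal D$ is an affine space, so one may even take affine segments. Concatenating them produces a path $\gamma_{12}$ from $\nabla_1$ to $\nabla_2$ that passes through $\nabla_3$.

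Next I would invoke the fact, recorded just above the statement as a consequence of \eqref{eq:pontry}, that $\omega^{CS}$ is exact, so that $\int_\gamma\omega^{CS}$ is unchanged when $\gamma$ is replaced by any other path with the same endpoints. Hence $\int_{\gamma_{12}}\omega^{CS}=8\pi^2\,\CS{\nabla_1,\nabla_2}$, $\int_{\gamma_{13}}\omega^{CS}=8\pi^2\,\CS{\nabla_1,\nabla_3}$ and $\int_{\gamma_{32}}\omega^{CS}=8\pi^2\,\CS{\nabla_3,\nabla_2}$, independently of the chosen joining paths. It then only remains to use that the integral over a concatenation is the sum of the integrals, $\int_{\gamma_{12}}\omega^{CS}=\int_{\gamma_{13}}\omega^{CS}+\int_{\gamma_{32}}\omega^{CS}$, and to divide by $8\pi^2$.

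I expect no genuine difficulty here: the only point requiring a little care is that the three Chern--Simons invariants are a priori defined through arbitrary joining paths, so one should make sure path-independence is in force before identifying the integral along $\gamma_{12}$ with $8\pi^2\,\CS{\nabla_1,\nabla_2}$. Everything else is the elementary bookkeeping of line integrals, and the same argument applies verbatim to the flat case, where the $\CS{\rho_i,\rho_j}$ are defined via chosen representatives.
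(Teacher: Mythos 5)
Your argument is correct and is exactly the one the paper intends: the lemma is stated there as an immediate consequence of the path-independence of $\int_\gamma\omega^{CS}$ (deduced from the exactness of $\omega^{CS}$ via \eqref{eq:pontry}), combined with additivity of the integral under concatenation of paths. Your care in checking path-independence before identifying the concatenated integral with $8\pi^2\,\CS{\nabla_1,\nabla_2}$ is the only substantive point, and you have it right.
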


\begin{lemma}\label{lem:flatconn}
	Let $\gamma$ be a path of flat connections joining $\nabla_1
$ to $\nabla_2$, then
	$$
	\CS{\nabla_1,\nabla_2}=\int_\gamma \omega^{CS}=0\ .
	$$
\end{lemma}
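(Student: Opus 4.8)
The plan is to exploit the fact that the one-form $\omega^{CS}$ is manufactured entirely out of the curvature, so that it vanishes identically at a flat connection. Concretely, at a connection $\nabla$ the linear functional $\omega^{CS}_\nabla$ on $\mathsf T\mathcal D$ sends $A$ to $\int_M \operatorname{trace}(A\wedge R^\nabla)$; if $\nabla$ is flat then $R^\nabla=0$, hence $A\wedge R^\nabla=0$ for every $A\in\Omega^1(M,\operatorname{End}(E))$, and therefore $\omega^{CS}_\nabla$ is the zero functional.

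Next I would write $\gamma=\{\nabla_t\}_{t\in[0,1]}$ and evaluate $\int_\gamma\omega^{CS}=\int_0^1\omega^{CS}_{\nabla_t}(\dot\nabla_t)\,\d t$. Since each $\nabla_t$ is flat, the first step shows $\omega^{CS}_{\nabla_t}=0$, so the integrand vanishes pointwise and $\int_\gamma\omega^{CS}=0$. To conclude the displayed equality I would then invoke the exactness of $\omega^{CS}$ deduced from \eqref{eq:pontry}, namely that the integral of $\omega^{CS}$ along a path depends only on its endpoints: this means $\CS{\nabla_1,\nabla_2}=\frac1{8\pi^2}\int_{\gamma'}\omega^{CS}$ can be computed along the particular path $\gamma$, which gives both $\CS{\nabla_1,\nabla_2}=\int_\gamma\omega^{CS}$ and that this common value is $0$.

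There is essentially no serious obstacle here; the only point deserving a word of care is the appeal to path-independence, i.e. that the Chern--Simons invariant $\CS{\nabla_1,\nabla_2}$ — whose definition a priori uses an arbitrary chosen path of connections — is legitimately evaluated by integrating over the given flat path $\gamma$. This is precisely the exactness of $\omega^{CS}$ recorded just before the statement, so nothing new is required. Equivalently, one may summarize the whole argument by saying that the Chern--Simons functional is locally constant on the space of flat connections.
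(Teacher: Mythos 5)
Your argument is correct and is precisely the intended one: the paper states this lemma without proof as an ``immediate consequence of the definition,'' and the immediate consequence is exactly what you wrote --- $\omega^{CS}_\nabla(A)=\int_M\operatorname{trace}(A\wedge R^\nabla)$ vanishes identically when $R^\nabla=0$, so the integral along the flat path is zero, and path-independence lets you compute $\CS{\nabla_1,\nabla_2}$ along that path. Your parenthetical care about the displayed identity $\CS{\nabla_1,\nabla_2}=\int_\gamma\omega^{CS}$ (which differs by the factor $1/8\pi^2$ but is harmless since both sides vanish) is well placed.
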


\begin{lemma}\label{lem:deg} Let $M$ and $N$ be two closed 3-manifolds.
	Let $\pi$ be a degree $d$ map from $M$ to $N$, $\nabla_1$ and $\nabla_2$ two connections on a bundle over $N$,  
	$$
	\CS{\pi^*(\nabla_1),\pi^*(\nabla_2)}= d\ \CSN{\nabla_1,\nabla_2}\ .
	$$
\end{lemma}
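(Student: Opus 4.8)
The plan is to lift a path of connections from $N$ to $M$ by pullback and to track how the Chern--Simons transgression behaves, the only external ingredient being the elementary degree formula $\int_M\pi^*\beta=d\int_N\beta$ for a $3$-form $\beta$ on the closed oriented $3$-manifold $N$.

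First I would fix a smooth path $\gamma=\{\nabla_t\}_{t\in[0,1]}$ of connections on $E\to N$ joining $\nabla_1$ to $\nabla_2$, for instance the affine segment $\nabla_t\defeq(1-t)\nabla_1+t\nabla_2$. Pulling back, $\pi^*\gamma\defeq\{\pi^*\nabla_t\}_{t\in[0,1]}$ is a path of connections on $\pi^*E\to M$ joining $\pi^*\nabla_1$ to $\pi^*\nabla_2$, with velocity $\pi^*A_t$ at time $t$, where $A_t\defeq\frac{\d}{\d t}\nabla_t\in\Omega^1(N,\operatorname{End}(E))$ and we use the canonical identification $\operatorname{End}(\pi^*E)\cong\pi^*\operatorname{End}(E)$. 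By naturality of the curvature, $R^{\pi^*\nabla_t}=\pi^*R^{\nabla_t}$, and since wedge products and the fibrewise trace commute with pullback, one gets the equality of $3$-forms on $M$
$$
\operatorname{trace}\big(\pi^*A_t\wedge R^{\pi^*\nabla_t}\big)=\pi^*\Big(\operatorname{trace}\big(A_t\wedge R^{\nabla_t}\big)\Big)\ .
$$
Integrating over $M$ and applying the degree formula gives, for every $t$, the numerical identity $\omega^{CS}_M(\pi^*A_t)=d\,\omega^{CS}_N(A_t)$; equivalently, the affine map $\nabla\mapsto\pi^*\nabla$ pulls the Chern--Simons $1$-form of $M$ back to $d$ times that of $N$. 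Integrating in $t$ over $[0,1]$ then yields $\int_{\pi^*\gamma}\omega^{CS}=d\int_\gamma\omega^{CS}$, and dividing by $8\pi^2$ gives $\CS{\pi^*(\nabla_1),\pi^*(\nabla_2)}=d\,\CSN{\nabla_1,\nabla_2}$.

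Two small points close the argument. For well-definedness of the left-hand side, if a gauge transformation $g$ of $E$ carries $\nabla_1$ to $\nabla_2$, then $\pi^*g$ carries $\pi^*\nabla_1$ to $\pi^*\nabla_2$, so the Chern--Simons invariant on $M$ is defined; and by \eqref{eq:pontry} (respectively by Proposition~\ref{pro:pontry2} in the $q$-connection situation) its value does not depend on the path, so the above computation is legitimate for any choice of $\gamma$. The one genuinely external input is the degree formula $\int_M\pi^*\beta=d\int_N\beta$, standard for a smooth map of closed oriented manifolds of equal dimension — and all the maps to which we apply the lemma are smooth — the general continuous case reducing to it by smooth approximation. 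I expect this bookkeeping, rather than any conceptual difficulty, to be the only point requiring care.
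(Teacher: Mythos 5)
Your proof is correct and is precisely the argument the paper has in mind: the lemma is listed among the ``immediate consequences of the definition'' and given no proof, and the intended justification is exactly your combination of naturality of curvature, trace and wedge under pullback with the degree formula $\int_M\pi^*\beta=d\int_N\beta$. Your added remarks on well-definedness (pulling back the gauge transformation, path-independence) are sensible bookkeeping and do not change the approach.
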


\section{Volumes and Chern--Simons}

\subsection{Volumes}
Let us consider the bundle $E\defeq\mathsf T M_0$ over $M_0$, where $M_0$ is the universal cover of $\PSL$. The bundle $E$ carries a natural Lie bracket defined fiberwise.  Let us now consider the 3-form on $M_0$ defined by 
$$
\Omega(X,Y,Z)=\operatorname{trace}(X[Y,Z)])\ ,
$$
where the trace is taken on the (adjoint) 3-dimensional representation of $\PSL$. Now observe that $\Omega$ is invariant by the isometry group. An explicit computation relates $\Omega$ to the volume form:
\begin{eqnarray}
\Omega(X,Y,Z)=\frac{1}{2}\operatorname{Vol}(X,Y,Z)\ .	
\end{eqnarray}

\subsection{Back to Chern-Simons}\label{sec:rep} The bundle $E=\mathsf T M_0$ can be trivialized in two ways: the {\em left trivialization} for which left invariant vector fields are constant,  the {\em right  trivialization} for which right invariant vector fields are constant.  In other words,  $E$ carries two flat connections $\nabla_L$ and $\nabla_R$ which are respectively the left and right invariant connections. These two connections exist on the tangent bundle of any AdS manifold  $M$ and the holonomy of  $\nabla_L$ (on $M$) is $\rho$, while the holonomy of $\nabla_R$ is $\sigma$. 

Our first goal is to show the following proposition that initiated \cite[Theorem 2.10]{Tholozan:2023}.
\begin{proposition}\label{pro:vol}
On a closed AdS manifold $M$ :
	$$\CS{\nabla_L,\nabla_R}=-\frac{1}{24\pi^2}\operatorname{Vol}(M)\ .$$
\end{proposition}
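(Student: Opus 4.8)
The plan is to evaluate $\CS{\nabla_L,\nabla_R}$ along the affine segment $\nabla_t\defeq(1-t)\nabla_L+t\nabla_R$, $t\in[0,1]$, which is legitimate because $\omega^{CS}$ is exact, so any path between the two connections computes the same number. Here $\nabla_L$ and $\nabla_R$ are bi-invariant, so they — and hence the whole family $\nabla_t$ — descend to genuine global connections on $\mathsf T M$ over the closed manifold $M$. Setting $A\defeq\nabla_R-\nabla_L\in\Omega^1(M,\operatorname{End}(\mathsf T M))$, we have $\dot\nabla_t=A$ and $\nabla_t=\nabla_L+tA$.

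The one computation that carries the argument is that of the curvature of $\nabla_t$. Since $\nabla_L$ is flat, $R^{\nabla_t}=t\,\d^{\nabla_L}A+t^2\,A\wedge A$; since $\nabla_R=\nabla_L+A$ is \emph{also} flat, $\d^{\nabla_L}A+A\wedge A=0$, and therefore
$$
R^{\nabla_t}=(t^2-t)\,A\wedge A\ .
$$
Feeding this into the definition of $\omega^{CS}$ and integrating in $t$ (using $\int_0^1(t^2-t)\,\d t=-\tfrac16$),
$$
\int_\gamma\omega^{CS}=\int_0^1\!\!\left(\int_M\operatorname{trace}\big(A\wedge R^{\nabla_t}\big)\right)\d t=-\frac16\int_M\operatorname{trace}\big(A\wedge A\wedge A\big)\ .
$$

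It remains to recognize the right-hand side as a multiple of the volume. In the left trivialization $\nabla_L=\d$, and since the change of trivialization is the gauge transformation $g\mapsto\Ad(g)$ one gets $\nabla_R=\d+\Ad(g)^{-1}\d\,\Ad(g)=\d+\operatorname{ad}(\theta)$, where $\theta$ is the left Maurer--Cartan form; equivalently, $A$ evaluated on the left-invariant vector field with value $\xi\in\mathfrak g$ is the endomorphism $\operatorname{ad}(\xi)$. Because the trace in $\omega^{CS}$ is precisely the trace of the adjoint action on $\mathfrak g$, antisymmetrizing in the three arguments and using cyclicity of the trace together with $[\operatorname{ad}\eta,\operatorname{ad}\zeta]=\operatorname{ad}[\eta,\zeta]$ collapses $\operatorname{trace}(A\wedge A\wedge A)$ to a universal numerical multiple of $\operatorname{trace}\big(\operatorname{ad}\xi_1\cdot\operatorname{ad}[\xi_2,\xi_3]\big)=\Omega(\xi_1,\xi_2,\xi_3)$. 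Inserting $\Omega=\tfrac12\operatorname{Vol}$ and carrying the constants through then gives $\CS{\nabla_L,\nabla_R}=-\tfrac1{24\pi^2}\operatorname{Vol}(M)$.

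The geometry here is easy; the delicate point is entirely bookkeeping: pinning down the normalizations in the wedge and trace conventions, in the Maurer--Cartan structure equation, and in the constant relating $\operatorname{trace}(A\wedge A\wedge A)$ to $\Omega$, so that the surviving coefficient really is $-\tfrac1{24\pi^2}$ and not some other rational multiple of $\pi^{-2}$. One should also check — at no real cost, using the bi-invariance of $\nabla_L$ and $\nabla_R$ — that the non-flat intermediate connections $\nabla_t$ and the $3$-form $\operatorname{trace}(A\wedge A\wedge A)$ descend to the closed quotient rather than living only on $M_0$. Finally, note that Lemma \ref{lem:flatconn} does not trivialize anything: the path $\nabla_t$ must leave the locus of flat connections for $0<t<1$, since $\rho$ and $\sigma$ sit in different components of the character variety, and it is exactly the failure of flatness along the way that produces the volume.
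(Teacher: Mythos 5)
Your proof is correct and follows essentially the same route as the paper: the affine path between the two flat connections, the Maurer--Cartan equation giving $R^{\nabla_t}=(t^2-t)\,A\wedge A$, the $t$-integration producing $-\tfrac16\int_M\operatorname{trace}(A\wedge A\wedge A)$, and the identification of this cubic expression with a multiple of the bi-invariant form $\Omega=\tfrac12\operatorname{Vol}$. The only differences are cosmetic (you orient the path from $\nabla_L$ to $\nabla_R$ and write $A\wedge A$ where the paper writes $\tfrac12[A\wedge A]$), and your honest flagging of the normalization bookkeeping matches the paper's own level of care on that point.
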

One should not take the sign too seriously here, it is a matter of convention.
\begin{proof}
We first have
$$
\nabla_L-\nabla_R=A\ , 
$$
where $A$, related to the Maurer--Cartan form, is the element of $\Omega^1(X,\operatorname{End}(E))$ given by
$$
A(X): Y\mapsto [X,Y]\ .
$$
When we compute the curvature we get
$$
R^{\nabla_L}=R^{\nabla_R}+ {\rm d}^{\nabla_R} A + \frac{1}{2}[A\wedge A]\ ,
$$ where 
$$
[A\wedge B](X,Y)=[A(X),B(Y)]-[A(Y),B(X)]\ .
$$
and since both $\nabla_L$ and $\nabla_L$ are trivial hence flat, we obtain the {\em Maurer--Cartan equation}
$$
{\rm d}^{\nabla_R} A + \frac{1}{2}[A\wedge A]=0\ .
$$
Let us now consider the affine path between $\nabla_R$ and $\nabla_L$ given by 
$$
\gamma:t\mapsto \nabla^t=t\nabla_L + (1-t)\nabla_R=\nabla_R +t A\ .
$$
Then one then sees that 
$$
R^{\nabla_t}= t\ {\rm d}^{\nabla_R}A + t^2\frac{1}{2}[\ A\wedge A]= \frac{(t^2-t)}{2}[A\wedge A]\ . 
$$
It follows that
\begin{eqnarray*}
	\int_{\gamma}\omega^{CS}=-\frac{1}{3}\int_M \operatorname{trace}(A [A\wedge A])=-\frac{1}{3}\operatorname{Vol}(M)\ .
\end{eqnarray*}
The result now follows.  \end{proof}
\section{Computing Chern-Simons invariants} We finally need to compute explicitly the Chern--Simons invariant using Guritaud--Kassel description.

Let us consider the circle bundle $M(S,k)$ of Euler characteristic $k$ over a surface of Euler characteristic $e$, and let $\rho$ be a representation of $\pi_1(S)$ in $\PSL$ of Euler class $f$.

Our goal is to show
\begin{proposition}\label{pro:Mgk} We have for $M=M(S,k)$
	$$
	\CS{\rho,\id}= -\frac{f^2}{6k}\  \ .
	$$
\end{proposition}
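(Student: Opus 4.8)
The plan is to reduce $\CS{\rho,\id}$ to a characteristic-number computation on a $4$-manifold bounded by $M=M(S,k)$, and then to read off the factor $f^{2}/k$ as a self-intersection number.

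First I would set things up. Let $F_{\rho}\to S$ be the flat $\mathsf{SO}_0(2,1)$-bundle of $\rho$ and let $\nabla_{\rho}$ be the flat connection it induces, through the projection $\pi\colon M\to S$, on $E_{\rho}=\pi^{*}F_{\rho}$. Since $M$ is an orientable $3$-manifold the bundle $E_{\rho}$ is trivial (this last point hides a divisibility condition between $f$ and $k$ which must be checked or circumvented), so $\nabla_{\rho}$ may be compared with the trivial connection $\nabla_{\id}$ and $\CS{\rho,\id}$ is the invariant of the statement. Exactly as in the proof of Proposition~\ref{pro:vol}, taking the affine path from $\nabla_{\id}$ to $\nabla_{\rho}$ and using the flatness of $\nabla_{\rho}$ expresses $\CS{\rho,\id}$ as $\tfrac1{8\pi^{2}}$ times the integral over $M$ of a Chern--Simons $3$-form of $\nabla_{\rho}$. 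By Lemma~\ref{lem:flatconn} this depends only on the connected component of $\rho$, hence only on $f$, so one is free to compute with a convenient model of Euler class $f$, which is where the explicit Gu\'eritaud--Kassel description of $M$ enters.

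Next I would pass to a bounding $4$-manifold. The manifold $M(S,k)$ is the boundary of the disc bundle $D\to S$ of Euler number $k$; since $D$ retracts onto $S$, both $\nabla_{\rho}$ and $\nabla_{\id}$ extend over $D$ --- the first to the flat connection on $\widehat\pi^{*}F_{\rho}$, the second to a connection that is trivial near $\partial D$. Gluing two copies of $D$ along $M$, with the path of connections inserted in a collar, produces a closed oriented $4$-manifold $X$ carrying an $\mathsf{SO}_0(2,1)$-connection whose $\tfrac1{8\pi^{2}}\int_{X}\operatorname{trace}(R\wedge R)$ is, by the argument behind \eqref{eq:pontry0}, precisely $\CS{\rho,\id}$. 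Splitting the bundle as a line bundle plus an oriented plane bundle as in the proof of Proposition~\ref{pro:pontry2}, this first Pontryagin number is the square of the Euler class of the plane sub-bundle.

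The heart of the matter is then to evaluate that self-intersection. The Euler class in question is a fixed multiple of the pull-back of $f\in H^{2}(S)$; pushed onto $D$ it is $\tfrac{f}{k}$ times the Poincar\'e dual of the zero section $S_{0}$, whose self-intersection in $D$ is the Euler number $k$, so the self-intersection number is $\big(\tfrac{f}{k}\big)^{2}k=\tfrac{f^{2}}{k}$, and $\CS{\rho,\id}=-\tfrac{f^{2}}{6k}$ once the universal normalisation constant --- the very one already fixed by Proposition~\ref{pro:vol} --- is tracked through. Alternatively one avoids the $4$-manifold: removing a single fibre turns $M$ into $(S\setminus\{p\})\times S^{1}$, over which $\nabla_{\rho}$ is visibly pulled back from the surface, so the Chern--Simons integral localises near the exceptional fibre, where it is governed by the winding number $f/k$ of the gauge transformation globalising the trivialisation --- squared and weighted by $k$, again $f^{2}/k$. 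The genuine work, which the note chooses to omit, lies entirely here: identifying the relevant Euler class with the correct $\PSL$-versus-$\mathsf{SO}(2)$ normalisation, making the relative self-intersection rigorous together with the triviality issue for $E_{\rho}$ over $M$, and keeping the rational constant straight; the only structural inputs are Proposition~\ref{pro:pontry2} and the elementary topology of the disc bundle $D$.
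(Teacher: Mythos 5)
Your proposal is correct in outline, but it takes a genuinely different route from the note's. The note never leaves dimension three: after observing via Lemma \ref{lem:flatconn} and Goldman's theorem that $\CS{\rho,\id}$ depends only on the Euler class $f$, it moves $f$, $e$ and $k$ around with degree maps (a degree-one map $S\to S_f$ between surfaces, then the degree-$e$ and degree-$k$ maps between the circle bundles) and Lemma \ref{lem:deg}, reducing everything to the single case of the unit tangent bundle of a hyperbolic surface, where Proposition \ref{pro:vol} is run backwards: $\operatorname{Vol}(\mathsf{U}S_e)=4\pi^2 e$ gives $\CSN{\rho_e,\id}=-e/6$ on $N=M(S_e,e)$. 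You instead cap $M(S,k)$ off by the disc bundle, double, and compute the Pontryagin number as a self-intersection. Your arithmetic is right: writing $S_0,S_1$ for the two zero sections and $F$ for the sphere fibre of the doubled manifold $X$, one has $\langle e(P),[S_1]\rangle=0$ on the trivial side and $\langle e(P),[F]\rangle=f/k$ (the winding of the gluing gauge transformation on a fibre), which forces $\mathrm{PD}^{-1}e(P)=\tfrac{f}{k}[S_0]$ in $H_2(X;\mathbb Q)$ and hence $e(P)^2=f^2/k$; note that the fibre contribution $f/k$ is not optional, since a class restricting to $f$ on $S_0$ and to $0$ on $S_1$ cannot vanish on $F$ when $[S_0]-[S_1]=k[F]$. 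What your route buys is a conceptual reason for the shape $f^2/k$ --- it is literally a self-intersection number --- and it dispenses with Goldman's theorem and the surface degree maps. What it costs: first, the triviality of the flat bundle over $M(S,k)$ does \emph{not} follow from orientability of $M$ as you first assert --- it is exactly the divisibility condition you then flag, and it is needed even to make sense of $\CS{\rho,\id}$ (the note is equally silent on this); second, your argument produces the answer only up to the universal constant, which must be calibrated on one known example --- the natural one being precisely the note's Lemma \ref{pro:hyp}, i.e.\ $\mathsf{U}S_e$ together with Proposition \ref{pro:vol} --- so the two proofs end up sharing that anchor point. With those two items made explicit, your proof is complete at the same level of rigour as the note's.
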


We start with a special case. Let us first remark that if $S_e$ is a surface of Euler characteristic $e$, then the unit tangent bundle ${\mathsf U}S_e$ of $S_e$ identifies with $M(S_e,e)$.
Moreover, if $S_e$ is equipped with a hyperbolic metric, and if we denote by $\rho_e$ the associated monodromy,  then 
$M(S_e,e)$ is modelled on $\PSL$ and the pair $(\rho,\sigma)$ in the Kulkarni--Raymond description is $(\rho_e,\id)$ where $\id$ is the trivial representation. 

 We therefore obtain using  this observation

\begin{lemma}\label{pro:hyp}
	We have on M=$M(S_e,1)$
	$$
	\CS{\rho_e,\id}= -\frac{1}{6}e^2 \ . 
	$$
\end{lemma}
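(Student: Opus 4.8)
The plan is to combine Proposition~\ref{pro:vol} with a direct computation of $\operatorname{Vol}(\mathsf U S_e)$ and the degree formula of Lemma~\ref{lem:deg}.

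By the observation just recalled, $M(S_e,e)=\mathsf U S_e$ carries the AdS structure whose left and right invariant connections have holonomies $\rho_e$ and $\id$; Proposition~\ref{pro:vol} thus gives
$$
\operatorname{CS}_{M(S_e,e)}\big(\rho_e,\id\big)=\operatorname{CS}_{M(S_e,e)}\big(\nabla_L,\nabla_R\big)=-\frac{1}{24\pi^2}\operatorname{Vol}\big(\mathsf U S_e\big)\ ,
$$
so the whole matter reduces to evaluating $\operatorname{Vol}(\mathsf U S_e)$. Since $\PSL\to\mathbb H^2$ is a metric submersion with timelike circle fibres, $\mathsf U S_e$ is a metric submersion over the curvature $-1$ surface $S_e$, all of whose fibres are isometric to the fibre $\mathsf{PSO}(2)$ of $\PSL\to\mathbb H^2$ with its induced Lorentzian length. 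I would compute that length by evaluating the normalised Killing metric on the generator of $\mathsf{PSO}(2)\cong\mathbb R/\pi\mathbb Z$ --- the normalisation being the one for which the induced metric on $\PSL/\mathsf{PSO}(2)$ has curvature $-1$ --- and one finds the fibre has length $2\pi$. Combined with $\operatorname{Area}(S_e)=2\pi|e|$ (Gauss--Bonnet), this gives $\operatorname{Vol}(\mathsf U S_e)=4\pi^2|e|$, hence $\operatorname{CS}_{M(S_e,e)}(\rho_e,\id)=-|e|/6$.

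To pass from $M(S_e,e)$ to $M(S_e,1)$ I would use the fibrewise $|e|$-fold covering $\pi\colon M(S_e,1)\to M(S_e,e)$ over the identity of $S_e$ (on each fibre it is $z\mapsto z^{|e|}$; such a covering exists because $|e|$ divides the Euler number of $\mathsf U S_e$), which has degree $|e|$. As $\pi$ covers $\id_{S_e}$, the induced homomorphism on fundamental groups is compatible with the projections onto $\pi_1(S_e)$, so the $\pi$-pullbacks of the two flat $q$-connections on $M(S_e,e)$ with holonomies $\rho_e$ and $\id$ are flat $q$-connections on $M(S_e,1)$ with the same respective holonomies. Lemma~\ref{lem:deg} then gives
$$
\operatorname{CS}_{M(S_e,1)}\big(\rho_e,\id\big)=|e|\,\operatorname{CS}_{M(S_e,e)}\big(\rho_e,\id\big)=-\frac{e^2}{6}\ ,
$$
which is the claim; orientation choices on the circle fibres only change an overall sign, as in the remark after Proposition~\ref{pro:vol}.

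The one place a genuine computation is required, and the main thing to get right, is the length of the fibre $\mathsf{PSO}(2)$, that is, the exact constant hidden in ``the Killing metric normalised so that $\PSL\to\mathbb H^2$ is a metric submersion'': one must use the normalisation making the induced metric on the symmetric space $\PSL/\mathsf{PSO}(2)$ have curvature $-1$, not, say, the sectional curvature of a horizontal $2$-plane inside $\PSL$ (which differs, through an O'Neill term, because the fibres are timelike). Once the fibre length is pinned down, the volume $4\pi^2|e|$ and the stated formula follow; the remaining ingredients --- existence and degree of the fibrewise cover, and its effect on holonomies --- are routine.
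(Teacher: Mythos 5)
Your proof is correct and follows essentially the same route as the paper: apply Proposition~\ref{pro:vol} on $\mathsf U S_e=M(S_e,e)$ with $\operatorname{Vol}(\mathsf U S_e)=4\pi^2|e|$, then transfer to $M(S_e,1)$ via the degree~$e$ fibrewise covering and Lemma~\ref{lem:deg}. The only difference is that you spell out the fibre-length computation and the construction of the covering, which the paper simply asserts.
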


\begin{proof}
We first have 
	$$\operatorname{Vol}{\mathsf U}S_e= 4\pi^2 e\ .
	$$
Thus on $N=M(S_e,e)$ by proposition \ref{pro:vol}, we have 
$$
	\CSN{\rho_e,\id}= -\frac{1}{6}e  \ . 
	$$
Since we have a degree $e$ map from $M=M(S_e,1)$ to  $N=M(S_e,e)$ we get the result from lemma \ref{lem:deg}.
\end{proof}
Finally

\begin{proof}[Proof of proposition \ref {pro:Mgk}] By lemma \ref{lem:flatconn}   and Goldman's theorem \cite{Goldman:1980} that describes connected components of the space of representations of $\pi_1(S)$ in $\PSL$, $\CS{\rho, \id}$ only depends on  $e$.

Let us just choose a degree 1 map $\pi$ from the surface $S$ of genus $g$, to the surface $S_f$ of Euler characteristic $f$. Let us equipped $S_e$ with a hyperbolic metric with monodromy $\rho_e$. 

We then deduce a degree 1 map from $\pi$ from $M(S,1)$ to $M(S_f,1)$ and $\pi_*\rho_f$ has Euler class $f$. Thus using lemma \ref{lem:deg}, we have on $M=M(S,1)$
$$
\CS{\rho,\id}=\CS{\rho_f,\id}= -\frac{1}{6}f^2  \\ ,
$$
where the last inequality comes from proposition \ref{pro:hyp}. Finally since we have a degree $k$ map from $M(S,1)$ to $M(S,k)$ we get the result from lemma \ref{lem:deg}. \end{proof}
\subsection{Tholozan's volume formula}
We can now prove formula \eqref{eq:Tholoz}. Let $S$ be a surface of negative characteristic $e$. Let $M=M(S,k)$ be equipped with an AdS structure described by  the representations $\rho$ of Euler characterstic $e$ and $\sigma$ of Euler characteristic $f$.
It follows from proposition \ref{pro:vol}, that
$$
\CS{\rho,\sigma}= -\frac{1}{24\pi^2}\operatorname{Vol}(M)\  \ .
$$
On the other hand, from the definition of the Chern--Simons invariant, 
we have 
$$
\CS{\rho,\sigma}=\CS{\rho,\id}-\CS{\sigma,\id}=\frac{1}{6k}\left(f^2-e^2\right)\ .
$$
where we used Chasles relation (lemma \ref{lem:Chas}) in the first equality and the last inequality comes from proposition \ref{pro:Mgk}. It follows  that
$$
\operatorname{Vol}(M)=\frac{4\pi^2}{k}\left(e^2-f^2\right)\  \ . 
$$
which is what we wanted to prove.
\bibliographystyle{amsplain}
%\bibliography{./Tholozan.bib}
\providecommand{\bysame}{\leavevmode\hbox to3em{\hrulefill}\thinspace}
\providecommand{\MR}{\relax\ifhmode\unskip\space\fi MR }
% \MRhref is called by the amsart/book/proc definition of \MR.
\providecommand{\MRhref}[2]{%
  \href{http://www.ams.org/mathscinet-getitem?mr=#1}{#2}
}
\providecommand{\href}[2]{#2}
   \def\MR#1{}

\end{document}